\title{Eventual regularization of the slightly supercritical fractional
Burgers equation}
\author{Chi Hin Chan, Magdalena Czubak and Luis Silvestre}
\newlength{\hchng}
\newlength{\vchng}
\newtheorem{thm}{Theorem}[section]
\newtheorem{cor}[thm]{Corollary}
\newtheorem{lemma}[thm]{Lemma}
\newtheorem{preremark}[thm]{Remark}
\newenvironment{remark}{\begin{preremark}\rm}{\medskip \end{preremark}}
\numberwithin{equation}{section}
\newcommand{\norm}[1]{\left\Vert#1\right\Vert}
\newcommand{\abs}[1]{\left\vert#1\right\vert}
\newcommand{\R}{\mathbb R}
\newcommand{\eps}{\varepsilon}
\newcommand{\lap} {\Delta}
\newcommand{\dd} {\; \mathrm{d}}
\DeclareMathOperator*{\osc}{osc}
\def\be{\begin{equation}}
\def\ee{\end{equation}}
\begin{document}
\maketitle

\begin{abstract}
We prove that a weak solution of a slightly supercritical fractional Burgers
equation becomes H\"older continuous for large time.
\end{abstract}

\section{Introduction}
We consider the fractional Burgers equation
\begin{equation} \label{e:burgers}
\theta_t + \theta \cdot \theta_x + (-\lap)^s \theta = 0.
\end{equation}

It is well known that solutions $\theta$ of the subcritical ($s>1/2$) and critical ($s=1/2$) Burgers equation are smooth \cite{kiselev-blow}, \cite{DongDuLi}, \cite{chan2008regularity}. 

There are parallel results for the quasi-geostrophic equation. In the
subcritical case, the solutions are smooth
\cite{MR1709781}. In the critical case the solutions are also smooth,
which was proved independently by Kiselev, Nazarov and Volberg \cite{kiselev2007global} and Caffarelli and Vasseur  \cite{caffarelli2006drift} using different methods. The proof by Kiselev, Nazarov and Volberg
is based on their previous work on the Burgers equation and consists of showing that certain modulus of
continuity (that is essentially Lipschitz for nearby points) is preserved
by the flow. The proof  by Caffarelli and Vasseur is more involved and
consists in proving a H\"older continuity result using classical ideas of
De Giorgi.

The two different methods were also used in the context of the critical Burgers equation.
The method of modulus of continuity was used in \cite{kiselev-blow} to show smoothness of solutions in the periodic setting.  
On the other hand, the parabolic De Giorgi method developed in  \cite{caffarelli2006drift} was used in \cite{chan2008regularity} to show smoothness of solutions in the non-periodic setting.

For the case of the supercritical quasi-geostrophic equation, it was shown
that the solutions are smooth for large time if $s = 1/2 - \eps$ for a
small $\eps$ \cite{silvestre2008eventual} extending the methods of
Caffarelli and Vasseur. More precisely the idea is to use the \emph{extra
room} in the improvement of oscillation lemma to compensate for the bad
scaling.

In this article, we prove that the solutions of a slightly supercritical
fractional Burger's equation become regular for large time. It is a
similar result to the one shown in \cite{silvestre2008eventual} for the
quasi-geostrophic equation.

It is important to point out that in \cite{kiselev-blow},\cite{ADV},\cite{DongDuLi} it was shown that  
singularities indeed occur for any $s < 1/2$. What we show here is that
they disappear after a certain amount of time. Even though singularities
may (and sometimes do) appear during an interval of time $[0,T]$, for
$t>T$ they do not occur any more. The amount of time $T$ that we need to
wait depends on the initial data and the value of $s$. For any given
initial data, $T \to 0$ as $s \to 1/2$.
The essential idea of the proof is to combine the ideas from
\cite{chan2008regularity} and \cite{silvestre2008eventual}. On the other
hand, we can present a completely self contained proof which has been
simplified considerably.

The idea in the proofs in this paper is still to make the improvement of
oscillation in parabolic cylinders compete with the deterioration of the
equation due to scaling. The improvement of oscillation lemma is the lemma
which allows us to show H\"older continuity when we iterate it at
different scales (as in the classical methods of De Giorgi). We present a
simple and completely self contained proof of this crucial lemma in this
paper (section \ref{s:oscillation}). An alternative approach could be to
redo the proof in \cite{chan2008regularity} adapted to general powers of
the Laplacian using the extension in \cite{caffarelli2007extension}.

We find a few advantages in the choice of presenting this new proof of the
oscillation lemma in this article. One is that it makes the paper self
contained. It also provides a proof that does not use the extension
argument and thus it could be generalized to other integral
operators instead of the fractional Laplacian. The new proof is
essentially a parabolic adaptation of the ideas in
\cite{silvestre2006holder}. This proof uses strongly that the equation is
non-local. This idea is also used in \cite{hj-new} to obtain a H\"older estimate for critical advection diffusion equations for bounded flows that are not necessarily divergence free.

We now state the main result.
\begin{thm}\label{mainthm}
There exists a universal constant $\alpha\in (0,\frac 12)$ such that if $\theta$ is a solution of \eqref{e:burgers} in $\R \times
[0,+\infty]$ with $\frac{1-\alpha}{2}<s\leq \frac 12$ and initial data $\theta_{0}\in L^{2}$, then there exists $T^{\ast}>0$ such that when $t>T^{\ast}$, $\theta(t)$ is $C^{\alpha}$ ($T^*$ depending only on $\norm{\theta_0}_{L^2}$).
\end{thm}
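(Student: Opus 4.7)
The plan is to combine an $L^2$ energy decay, an $L^2\to L^\infty$ smoothing, and a scale-dependent improvement-of-oscillation lemma. First I would construct a weak solution $\theta\in L^\infty([0,\infty);L^2)\cap L^2_{\mathrm{loc}}([0,\infty);H^s)$ by vanishing viscosity, exploiting that the nonlinearity $\theta\theta_x=\frac12(\theta^2)_x$ is in divergence form so that no boundary term appears in the energy estimate. The energy identity
\[
\|\theta(t)\|_{L^2}^2+2\int_0^t\|(-\lap)^{s/2}\theta\|_{L^2}^2\,d\tau=\|\theta_0\|_{L^2}^2
\]
produces monotone decay of $\|\theta(t)\|_{L^2}$ and, combined with a Nash-type inequality, forces $\|\theta(t)\|_{L^2}\to 0$ at a quantitative rate.

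Next I would upgrade $L^2$ to $L^\infty$ through a De Giorgi truncation at levels $C_k$ applied to $(\theta-C_k)_+$, in the spirit of \cite{chan2008regularity}. Since the drift $b=\theta$ is itself the unknown but the equation is in divergence form, the level set energy inequality closes and gives $\|\theta(t)\|_{L^\infty}\leq \Phi\bigl(\|\theta(\tau_0)\|_{L^2},\,t-\tau_0\bigr)$, uniformly in $s$ near $\tfrac12$. Combining with the decay in Step 1, this furnishes $T^{\ast}=T^{\ast}(\|\theta_0\|_{L^2})$ such that $\|\theta(t)\|_{L^\infty}\leq M_0$ for all $t\geq T^{\ast}$, where $M_0$ is a preassigned small universal constant.

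Once $\theta$ is uniformly bounded I would iterate the improvement-of-oscillation lemma established in Section \ref{s:oscillation}. That lemma says that a bounded solution of $\theta_t+b\theta_x+(-\lap)^s\theta=0$ with $\|b\|_{L^\infty}\leq M$ and $\osc\theta\leq 1$ in a large parabolic cylinder satisfies $\osc\theta\leq 1-\eta$ in a smaller one, where $\eta>0$ is quantitative and depends on $M$ together with a non-local tail control on $\theta$. Rescaling about any point at scale $r<1$ changes the drift size by a factor $r^{1-2s}$; for $s<\tfrac12$ this grows as $r\downarrow 0$, which is the scaling obstruction. By choosing $\alpha$ sufficiently small, equivalently $s$ sufficiently close to $\tfrac12$, one arranges that at each dyadic step $r_{k+1}=\lambda r_k$ the loss $\lambda^{1-2s}$ is dominated by the gain $(1-\eta)$, so the iteration closes and the oscillation decays geometrically across all scales, yielding the $C^\alpha$ estimate.

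The main obstacle is balancing these two competing quantities at every scale: $\eta$ depends on the effective drift and on the solution's tails in ways that must be tracked uniformly as the cylinders shrink, whereas the scaling loss depends linearly on $1-2s$. Following the philosophy of \cite{silvestre2008eventual} but in the self-contained, extension-free framework of Section \ref{s:oscillation}, the key is to make both $M_0$ (produced in Step 2) and $1-2s$ small enough that the net factor per step stays below $\lambda^\alpha$ for a universal $\alpha\in(0,\tfrac12)$. After this, passing from the local Hölder estimate to the statement of the theorem is routine, since for $t\geq T^{\ast}$ the a priori $L^\infty$ bound $\|\theta(t)\|_{L^\infty}\leq M_0$ puts us exactly in the regime where the local estimate applies.
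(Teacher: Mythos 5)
Your overall architecture --- make $\norm{\theta(t)}_{L^\infty}$ eventually small, then iterate a quantitative improvement-of-oscillation lemma at dyadic scales under the balance condition $\alpha\geq 1-2s$ --- is the same as the paper's, and your balance is the right mechanism: zooming in by $\rho$ amplifies the drift by $\rho^{2s-1}=\rho^{-(1-2s)}$ (note your factor ``$r^{1-2s}$'' has the exponent sign flipped, though your intent is clearly the growing factor), and the H\"older renormalization $\rho^{-\alpha}$ compensates it precisely when $\rho^{\alpha+2s-1}\leq 1$, i.e.\ $s>\frac{1-\alpha}{2}$; the paper implements this in Theorem~\ref{t:main-viscosity} via $\theta_{k+1}(x,t)=\rho^{-\alpha}[\theta_k(\rho(x+L_t),\rho^{2s}t)-d]$, which keeps the drift coefficients $M_k\leq 1$ at every scale. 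Where you genuinely diverge is the $L^2\to L^\infty$ step: the paper does not run a De~Giorgi truncation scheme but proves the explicit decay $\norm{\theta(\cdot,t)}_{L^\infty}\leq C(s)\,t^{-1/(4s)}\norm{\theta_0}_{L^2}$ (Theorem~\ref{thmdecay}) by a short maximum-principle argument: evaluate the equation for $F=t^{1/(4s)}\theta$ at a maximum point and bound $(-\lap)^sF$ from below by the integral formula plus Cauchy--Schwarz on the tail. Your De~Giorgi route (the Caffarelli--Vasseur first lemma adapted as in the critical Burgers paper) is viable but heavier; what each buys: the paper's argument is two pages and gives an explicit rate, yours would generalize more readily to settings without a clean maximum principle.

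Two concrete corrections. First, your claim that the energy identity plus a Nash-type inequality forces $\norm{\theta(t)}_{L^2}\to 0$ at a quantitative rate is unjustified: Nash requires an $L^1$ (low-frequency) control that $L^2$ data does not provide, and no uniform $L^2$ decay rate holds (already the linear fractional heat semigroup decays arbitrarily slowly in $L^2$ for suitable data). This step is, however, unnecessary: the smoothing estimate itself carries a factor $(t-\tau_0)^{-\gamma}$ (or simply use Theorem~\ref{thmdecay}), which with $\tau_0=t/2$ and the nonincreasing $L^2$ norm yields $T^*$ depending only on $\norm{\theta_0}_{L^2}$, so delete the Nash argument rather than repair it. Second, the parts you flag as ``to be tracked'' are exactly where the paper's work lies and cannot be waved through: (i) the growth bound $|\theta_k(x)|\leq 500^{2\alpha}|x|^{2\alpha}$ for $|x|\geq 1$ must be propagated under the renormalization, including the recentering $L_t$ forced by subtracting the constant $d$ in the presence of the drift --- this is the content of Lemma~\ref{l:techlemma} and the $\phi_1,\phi_2$ comparison in Theorem~\ref{t:main-viscosity}; and (ii) since a ``solution'' here is by definition a vanishing-viscosity limit, the H\"older estimate must be proved for the $\eps_1$-approximations uniformly in $\eps_1$, which forces stopping the iteration at scale $\sim\eps_1^{2-2s}$ (the rescaled viscosity $\rho^{(2s-2)k}\eps_1$ grows as you zoom in) and then passing to the limit as in Corollary~\ref{c:main1}. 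Neither point breaks your plan, but as written the proposal asserts the iteration closes without verifying the two conditions that actually make it close.
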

\begin{remark}
We note that we believe this could be extended to data in any $L^{p}, 1\leq p < \infty$, but for simplicity we do not pursue this here.
\end{remark}
\medskip
\noindent Notation: \\
$Q_r = [-r,r] \times [-r^{2s},0]$.\\
$\osc_{Q_r} \theta = \sup_{Q_r} \theta - \inf_{Q_r} \theta$.
\section{Preliminaries}

\subsection{The notion of a solution and vanishing viscosity approximation}
By a \emph{solution} of \eqref{e:burgers} we mean a weak solution (a solution in the sense of distributions) that can be obtained through the vanishing viscosity method.  In other words it is a limit as $\eps_1 \rightarrow 0$ of solutions satisfying
\begin{equation} \label{e:burger-approx}
\begin{aligned}
\theta_t + \theta \cdot \theta_x + (-\lap)^s \theta - \eps_1 \lap \theta &= 0,\\
\theta(\cdot,0)&=\theta_0 \in L^2(\R),
\end{aligned}
\end{equation}
where $\theta_0$ is an initial data for \eqref{e:burgers}.

For every $\eps_1>0$ and $\theta_0 \in L^2$, the equation \eqref{e:burger-approx} has a solution $\theta$ which is $C^\infty$ for all $t>0$. We list the properties of such solution in the next elementary lemma.

\begin{lemma}\label{l:properties-approx}
For every $\eps_1>0$ and $\theta_0 \in L^2$, the equation \eqref{e:burger-approx} is well posed and its solution $\theta$ satisfies
\begin{enumerate}
\item $\theta(\cdot, t) \in C^{\infty}$ for every $t>0$.
\item Energy equality: \[\norm{\theta(\cdot, t)}_{L^2(\R)}^2 + \int_0^t \norm{\theta(\cdot, t)}_{\dot H^s(\R)}^2 + \eps_1 \norm{\theta(\cdot, t)}_{\dot H^1(\R)}^2 \dd t = \norm{\theta_0}_{L^2(\R)}^2.\]
where $\dot H^s$ stands for the homogeneous Sobolev space.
\item For every $t>0$, $\theta(x,t) \to 0$ as $x \to \pm \infty$.
\end{enumerate}
\end{lemma}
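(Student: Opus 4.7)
The plan is to regard \eqref{e:burger-approx} as a semilinear parabolic equation whose principal linear part is the strong dissipation $\partial_t - \eps_1 \Delta$, treating $(-\Delta)^s \theta$ (with $s \leq 1/2$) and $\theta \theta_x = \tfrac{1}{2}\partial_x(\theta^2)$ as lower-order perturbations. Local well-posedness in $C([0,T]; L^2) \cap L^2([0,T]; H^1)$ follows by a standard Picard fixed-point argument in the mild (Duhamel) formulation, using the smoothing estimate $\norm{\partial_x e^{\eps_1 t \Delta} f}_{L^2} \lesssim (\eps_1 t)^{-1/2} \norm{f}_{L^2}$ to close the estimate for the quadratic nonlinearity in conservative form, and the observation that $(-\Delta)^s$ is subordinate to $-\eps_1 \Delta$ to absorb the fractional term.

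For (1), the smoothness assertion follows by a parabolic bootstrap: starting from a solution in $L^\infty_t L^2 \cap L^2_t H^1$, differentiating the equation and applying the same energy method inductively yields $\theta \in C([\delta, T]; H^k)$ for each $k \geq 0$ and $\delta > 0$, which by Sobolev embedding in one dimension places $\theta(\cdot,t)$ in $C^\infty$ for every $t > 0$. For (2), once $\theta$ is smooth and decays at infinity, the energy identity is rigorous: multiply by $\theta$, integrate in $x$, use $\int \theta^2 \theta_x \dx = \tfrac{1}{3}\int \partial_x(\theta^3)\dx = 0$ (legitimate by the decay in (3)), $\int \theta (-\Delta)^s \theta \dx = \norm{\theta}_{\dot H^s}^2$ via Plancherel, and $-\eps_1 \int \theta \Delta \theta \dx = \eps_1 \norm{\theta}_{\dot H^1}^2$, and integrate the resulting identity in time. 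Conversely, this a priori energy bound promotes the local solution to a global one, completing the well-posedness.

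For (3), for each fixed $t > 0$ the solution lies in $H^1(\R)$ (in fact in every $H^k$), so the 1D Sobolev embedding $H^1(\R) \hookrightarrow C^{0,1/2}(\R)$ forces $\theta(\cdot,t)$ to be uniformly $\tfrac{1}{2}$-H\"older continuous on all of $\R$. A uniformly continuous $L^2$ function on $\R$ must vanish at infinity: if $\abs{\theta(x_n)} \geq \eta$ along a sequence $\abs{x_n} \to \infty$, uniform continuity forces $\abs{\theta} \geq \eta/2$ on intervals of a fixed positive length around each $x_n$, contradicting $\theta(\cdot,t) \in L^2$.

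The main technical step is making the bootstrap rigorous, but with the viscous regularization $-\eps_1 \Delta$ present this is routine, since the fractional term strictly lowers derivatives by less than one and therefore never obstructs the induction. The energy equality (as opposed to inequality) requires the smoothness established in (1), so the logical order is: local solution $\Rightarrow$ smoothness $\Rightarrow$ energy equality $\Rightarrow$ global solution $\Rightarrow$ decay at infinity.
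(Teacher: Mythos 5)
Your proposal is correct and takes essentially the same route as the paper: a Duhamel/fixed-point argument using the $t^{-1/2}$ smoothing of the dissipative semigroup to handle $\tfrac12\partial_x(\theta^2)$, a parabolic bootstrap for item 1 (and, via regularity plus $\theta(\cdot,t)\in L^2$, item 3), the energy identity by multiplying by $\theta$ and integrating by parts, and global continuation from the non-increasing $L^2$ norm. The only cosmetic difference is that the paper includes $(-\lap)^s$ in the linear semigroup and contracts in the Kato-type norm $\sup_{[0,T]}\bigl(\norm{\theta(\cdot,t)}_{L^2}+t^{1/2}\norm{\partial_x\theta(\cdot,t)}_{L^2}\bigr)$, whereas you treat the fractional term as a perturbation of the heat semigroup and work in $C([0,T];L^2)\cap L^2([0,T];H^1)$.
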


\begin{proof}
We consider the operator that maps $\theta$ to the solution of
\[ \tilde \theta_t + (-\lap)^s \tilde \theta - \eps_1 \lap \tilde \theta =- \theta \ \theta_x. \]
Then we see that the map $A : \theta \mapsto \tilde \theta$ is a contraction in the norm
\[ |||\theta||| = \sup_{[0,T] } \norm{\theta(\cdot,t)}_{L^2} + t^{1/2} \norm{\partial_x \theta(\cdot,t)}_{L^2} \]
To see that we note
\[
|||e^{-t((-\lap)^{s}-\eps_{1}\lap)}\theta_{0}|||\leq C\norm{\theta_{0}}_{L^{2}}.
\]
(This is an elementary computation using Fourier transform). Given $\theta_1$ and $\theta_2$ such that $||| \theta_i ||| \leq R$ for $i = 1,2$, we estimate $|||A\theta_1 - A \theta_2|||$ using Duhamel formula. On one hand we have
\begin{align*}
|| A \theta_1(\cdot,t) - A \theta_2(\cdot,t) ||_{L^2} &\leq C \int_0^t \norm{\theta_1(\cdot,r) \partial_x \theta_1(\cdot,r) - \theta_2(\cdot,r) \partial_x \theta_2(\cdot,r)}_{L^2} \dd r \\
&\leq C \int_0^t \norm{\theta_1-\theta_2}_{L^\infty} \norm{\partial_x \theta_1}_{L^2} + \norm{\theta_2}_{L^\infty} \norm{\partial_x \theta_1 - \partial_x \theta_2}_{L^2} \dd r \\
\intertext{Using the interpolation inequality: $||f||_{L^\infty} \leq ||f||_{L^2}^{1/2} ||f'||_{L^2}^{1/2}$,}
&\leq C R \ |||\theta_1 - \theta_2||| \int_0^t (t-r)^{-1/4} \dd r \leq C R |||\theta_1 - \theta_2||| t^{3/4}.
\end{align*}

On the other hand, we also estimate

\begin{align*}
t^{1/2} || \partial_x A\theta_1(\cdot,t) - \partial_x A \theta_2(\cdot,t) ||_{L^2} &\leq C t^{1/2}  \int_0^t (t-r)^{-{1/2}} \norm{\theta_1(\cdot,r) \partial_x \theta_1(\cdot,r) - \theta_2(\cdot,r) \partial_x \theta_2(\cdot,r)}_{L^2} \dd r \\
&\leq C R t^{1/2} \ |||\theta_1 - \theta_2||| \int_0^t (t-r)^{-3/4} \dd r \leq C R |||\theta_1 - \theta_2||| t^{3/4}
\end{align*}

Thus, if we choose $T$ small enough (depending on $R$), $A$ will be a contraction in the ball of radius $R$ with respect to the norm $||| \cdot |||$.

Therefore, the equation \eqref{e:burger-approx} has a unique solution locally in time for which the norm $|||\cdot|||$ is bounded. A standard bootstrap argument proves that moreover $|||\partial_x^k \theta|||_{L^2} \leq C t^{-k/2}$ for all $k \geq 0$. This proves 1. and 3. for short time.   

The energy equality 2. follows immediately by multiplying equation \eqref{e:burger-approx} by $\theta$ and integrating by parts. Since the $L^2$ norm of the solution is non increasing, the solution can be continued forever, thus 1. and 3. hold for all time.
\end{proof}
 
If we let $\eps_1 \to 0$, the energy estimate allows us to obtain a subsequence of solutions of the approximated problem that converges weakly in $L^\infty(L^2) \cap L^2(\dot H^s)$ to a weak solution for which the energy inequality holds. In a later section, we will also prove a bound of the $L^\infty$ norm of $\theta(\cdot, t)$ for $t>0$, that is also independent of $\eps_1$, thus we can also find a subsequence that converges weak-$\ast$ in $L^{\infty}((t,+\infty) \times \R)$ for every $t>0$. 

\subsection{A word about scaling}
There is a one-parameter group of scalings that keeps the equation
invariant. It is given by $\theta_r = r^{2s-1}\theta(rx, r^{2s}t )$. If
$\theta$ solves \eqref{e:burgers}, then so does $\theta_r$. In the
critical case $s=1/2$, the scaling of the equation keeps the $L^\infty$
norm fixed. This case is critical because the scaling coincides with the a
priori estimate given by the maximum principle.

We can consider a one parameter scaling that preserves H\"older spaces.
The function $\theta_r = r^{-\alpha}\theta(rx, r^{2s}t)$ has the same
$C^\alpha$ semi-norm as $\theta$. If we want to prove that $\theta \in
C^\alpha$, we will have to deal with this type of scaling, but in this
case the equation is not conserved. Instead, if $\theta$ satisfies
\eqref{e:burgers}, $\theta_r$ satisfies
\[ \partial_t \theta_r + r^{2s - 1 + \alpha} \theta_r \cdot \partial_x
\theta_r + (-\lap)^s \theta_r = 0. \]

We have an extra factor in front of the nonlinear term. Note that if
$\alpha > 1-2s$ (only slightly supercritical) and $r < 1$ (zoom in), this
factor is smaller than one.

In the case of the equation with the extra term $\eps_1 \lap \theta$, the viscosity will have a larger effect in smaller scales. Indeed, if $\theta$ satisfies
\eqref{e:burger-approx}, $\theta_r$ satisfies
\[ \partial_t \theta_r + r^{2s - 1 + \alpha} \theta_r \cdot \partial_x
\theta_r + (-\lap)^s \theta_r + r^{2s-2} \eps_1 \lap \theta_r= 0. \]

\section{$L^\infty$ Decay}
First, as an immediate consequence of the energy equality in Lemma \ref{l:properties-approx} we have the following lemma.  
\begin{lemma}\label{l:l2} If $\theta$ is a
solution of \eqref{e:burgers}, then
\[
\norm{\theta(t)}_{L^2(\R)}\leq \norm{\theta_0}_{L^2{(\R)}}.
\]
\end{lemma}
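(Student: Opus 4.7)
The plan is to pass the energy equality from Lemma~\ref{l:properties-approx} to the limit. By definition, the weak solution $\theta$ of \eqref{e:burgers} is obtained as a limit (along some subsequence $\eps_1 \to 0$) of smooth solutions $\theta^{\eps_1}$ of \eqref{e:burger-approx}, with convergence at least weakly in $L^\infty_t L^2_x$. So the strategy is: first get the bound uniformly at the level of the approximation, then take the limit using lower semi-continuity of the norm.

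At the approximate level, item~2 of Lemma~\ref{l:properties-approx} gives
\[
\norm{\theta^{\eps_1}(\cdot,t)}_{L^2(\R)}^2 + \int_0^t \norm{\theta^{\eps_1}(\cdot,\tau)}_{\dot H^s(\R)}^2 \dd\tau + \eps_1 \int_0^t \norm{\theta^{\eps_1}(\cdot,\tau)}_{\dot H^1(\R)}^2 \dd\tau = \norm{\theta_0}_{L^2(\R)}^2.
\]
Both the dissipation integrals are nonnegative, so we may drop them to obtain the pointwise-in-$t$ bound $\norm{\theta^{\eps_1}(\cdot,t)}_{L^2} \leq \norm{\theta_0}_{L^2}$, uniformly in $\eps_1 > 0$.

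Finally, since $\theta^{\eps_1}(\cdot,t) \rightharpoonup \theta(\cdot,t)$ weakly in $L^2(\R)$ (for a.e.\ $t$, after passing to the subsequence used to construct $\theta$), the lower semi-continuity of the $L^2$ norm with respect to weak convergence gives
\[
\norm{\theta(\cdot,t)}_{L^2(\R)} \leq \liminf_{\eps_1 \to 0} \norm{\theta^{\eps_1}(\cdot,t)}_{L^2(\R)} \leq \norm{\theta_0}_{L^2(\R)},
\]
which is the claim. There is no real obstacle here; the only subtlety worth noting is that the bound only needs to hold almost everywhere in $t$ for the subsequent use, but in fact one can arrange it for every $t$ by working with the $L^\infty_t L^2_x$ weak-$\ast$ formulation.
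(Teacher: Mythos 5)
Your proof is correct and follows exactly the route the paper intends: the paper simply declares the lemma an immediate consequence of the energy equality in Lemma~\ref{l:properties-approx}, which (together with the vanishing-viscosity definition of solution and weak lower semi-continuity of the $L^2$ norm) is precisely the argument you spell out. No discrepancy to report.
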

Nonincreasing properties of $L^p$ norms as above for general $1<p\leq \infty$ for the quasi-geostrophic equations were showed in  \cite{Resnickthesis},\cite{CordobaDouble}.   Now we have a theorem about the decay of the $L^\infty$ norm.  See also \cite{kiselev-blow},\cite{caffarelli2006drift},\cite{chan2008regularity},\cite{silvestre2008eventual}.
\begin{thm}\label{thmdecay}
If $\theta$ is a solution of \eqref{e:burgers}, then
\begin{align}
\sup_{x\in \R}\abs{\theta(x,t)}\leq C(s)
t^{-\frac{1}{4s}}\norm{\theta_0}_{L^2(\R)},
\end{align}
where $C(s)=\frac{2s}{C_s^{1/4s}}\sqrt{\tfrac{2}{1+4s}}$, and $C_s$ is the constant appearing the integral formulation of the fractional Laplacian below.
\end{thm}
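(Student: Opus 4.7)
The plan is to establish the bound first for the smooth vanishing-viscosity approximations $\theta = \theta^{\eps_1}$ from Lemma~\ref{l:properties-approx}, with a constant independent of $\eps_1$, and then pass to the limit $\eps_1 \to 0$ using the weak-$*$ compactness already discussed. The core ingredient is a pointwise ``nonlinear maximum principle'' for $(-\lap)^s$.

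Let $M(t) = \sup_x \theta(x,t)$. By item 3 of Lemma~\ref{l:properties-approx}, $\theta(\cdot,t)$ vanishes at infinity, so this supremum is attained at some point $x_0 = x_0(t) \in \R$. At $x_0$ one has $\theta_x = 0$ (so the Burgers nonlinearity vanishes) and $\theta_{xx} \le 0$ (so the viscous term $-\eps_1 \lap \theta$ has the favorable sign), and \eqref{e:burger-approx} gives
\[ \theta_t(x_0,t) \le -(-\lap)^s \theta(x_0,t). \]
The symmetric argument applied to $\inf_x \theta$ yields the matching bound from below, so it suffices to control $M(t)$.

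The crucial step is the pointwise inequality
\[ (-\lap)^s \theta(x_0,t) \ge c(s)\,\frac{M(t)^{1+4s}}{\norm{\theta_0}_{L^2}^{4s}}. \]
Starting from the integral formula
\[ (-\lap)^s \theta(x_0,t) = C_s \int_{\R} \frac{\theta(x_0,t) - \theta(y,t)}{|x_0-y|^{1+2s}}\,\dd y, \]
the integrand is nonnegative at the maximum, so discarding the set $\{|y-x_0| < R\}$ still gives a lower bound. The ``principal'' contribution from the subtracted $\theta(x_0,t)$ equals $C_s M(t) R^{-2s}/s$, while the ``correction'' involving $\theta(y,t)$ is controlled by Cauchy--Schwarz by $C_s \norm{\theta_0}_{L^2}\sqrt{2/(1+4s)}\,R^{-1/2-2s}$ (using Lemma~\ref{l:l2}). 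Choosing $R \sim (\norm{\theta_0}_{L^2}/M(t))^2$ so that the correction is a fixed fraction of the principal term and keeping track of constants produces the explicit $C(s)$ in the statement.

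Finally, one converts the pointwise bound into an ODE for $M$. Since $\theta^{\eps_1}$ is smooth with derivatives bounded on $[t_0,\infty) \times \R$ for each $t_0 > 0$, the function $M$ is locally Lipschitz in $t$, hence differentiable a.e., and a standard envelope argument yields
\[ M'(t) \le \theta_t(x_0(t),t) \le -c(s)\,\frac{M(t)^{1+4s}}{\norm{\theta_0}_{L^2}^{4s}} \]
for a.e.\ $t$. Separation of variables then gives $M(t) \le C(s)\,t^{-1/(4s)}\norm{\theta_0}_{L^2}$, and since this constant is independent of $\eps_1$ the bound is preserved in the weak-$*$ limit. The subtlest point, in my view, is this envelope step, since $x_0(t)$ need not vary continuously in $t$; a convenient alternative would be to run the entire argument at the level of $L^p$ norms, use the C\'ordoba--C\'ordoba inequality to handle $\int |\theta|^{p-2}\theta\,(-\lap)^s\theta$, obtain the decay of $\norm{\theta}_{L^p}$ with constants uniform in $p$, and then send $p \to \infty$.
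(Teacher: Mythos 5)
Your proposal is essentially correct, and its core estimate is the same as the paper's: at a point where the supremum is attained the Burgers term vanishes, the viscous term has a sign, and the fractional Laplacian is bounded below by truncating the integral outside radius $R$, using Cauchy--Schwarz together with $\norm{\theta(\cdot,t)}_{L^2}\leq\norm{\theta_0}_{L^2}$, and optimizing $R\sim(\norm{\theta_0}_{L^2}/M)^2$; this is exactly the computation \eqref{Feq2}--\eqref{Feq3}. Where you diverge is in how the time decay is extracted. You keep $M(t)=\sup_x\theta(x,t)$, invoke an envelope (Danskin/Rademacher-type) lemma to get $M'(t)\leq\theta_t(x_0(t),t)$ a.e., and integrate the resulting ODE $M'\leq -c(s)M^{1+4s}/\norm{\theta_0}_{L^2}^{4s}$. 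The paper instead considers $F(x,t)=t^{1/(4s)}\theta(x,t)$ on $\R\times[0,T]$ and takes a single global space--time maximum of $F$ (which exists because $F$ vanishes at $t=0$ and at spatial infinity); the time weight converts the whole argument into one pointwise inequality at that maximum, with no differentiation of a running supremum, no measurability/regularity discussion for $x_0(t)$, and no ODE comparison. Both routes are sound; yours is the more classical Constantin--Vicol-style nonlinear maximum principle and is arguably more flexible (and your fallback via $L^p$ decay and $p\to\infty$ would also work), while the paper's weighted-maximum trick buys a shorter, cleaner proof that sidesteps precisely the envelope step you single out as the subtle point. Two small caveats for your version: the ODE is only meaningful while $M(t)>0$ (when $M(t)\leq 0$ the one-sided bound is vacuous, and the lower bound on $\inf\theta$ comes from the symmetric argument, as you note), and your normalization of $R$ will produce a constant of the same form but not literally the $C(s)$ displayed in the statement; since the content of the theorem is the rate $t^{-1/(4s)}$ with a constant depending only on $s$, this is immaterial.
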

\begin{proof}
Let $T>0$ and suppose $\theta$ is a solution of \eqref{e:burger-approx}. 
Define
\[
F(x,t)=t^\frac{1}{p}\theta(x,t),
\]
for some $p$ to be chosen later.  By Lemma \ref{l:properties-approx}  there must exist a point $(x_0,t_0)$ such that
\begin{align*}
\sup_{\R\times[0,T]}F(x,t)=F(x_0,t_0)<\infty.
\end{align*}
Observe that $F$ satisfies the following equation
\be\label{Feq}
F_t-\epsilon \lap F+(-\lap)^sF=\frac{1}{pt}F-\frac{1}{t^\frac1p}F\cdot F_x.
\ee
At $(x_0,t_0)$ we have
\begin{align*}
 F_t\geq 0,\quad F_x=0,\quad-\lap F\geq 0.
\end{align*}
Then by \eqref{Feq}
\be\label{Feq1}
 (-\lap)^s F(x_0,t_0)\leq \frac{1}{pt_0}F(x_0,t_0).
\ee
Using $F(x_0,t_0)-F(y,t_0)\geq 0$ for all $y\in \R$, we compute a lower
bound for $(-\lap)^sF(x_0,t_0)$ as follows
\begin{align}
(-\lap)^sF(x_0,t_0)&=C_s \int_\R
\frac{F(x_0,t_0)-F(y,t_0)}{\abs{x_0-y}^{1+2s}}dy\nonumber\\
                   &\geq C_s \int_{\abs{x_0-y}>R}
\frac{F(x_0,t_0)-F(y,t_0)}{\abs{x_0-y}^{1+2s}}dy,\quad\mbox{for any
$R>0$}\nonumber\\
                   &= \frac{C_s}{sR^{2s}}F(x_0,t_0)- C_s\int_{\abs{x_0-y}>R}
\frac{F(y,t_0)}{\abs{x_0-y}^{1+2s}}dy\label{Feq2}.
\end{align}
Next by Cauchy Schwarz
\begin{align}
\int_{\abs{x_0-y}>R} \frac{F(y,t_0)}{\abs{x_0-y}^{1+2s}}dy&\leq
\frac{\tilde {C_s}}{R^{1/2+2s}}\norm{F(t_0)}_{L^2(\R)}\nonumber\\
&=\frac{\tilde {C_s}t_0^\frac{1}{p}}{R^{1/2+2s}}\norm{\theta(t_0)}_{L^2(\R)}\leq\frac{\tilde {C_s}t_0^\frac{1}{p}}{R^{1/2+2s}}\norm{\theta_0}_{L^2(\R)},\label{Feq3}
\end{align}
where the last inequality follows from Lemma \ref{l:l2} and $\tilde {C_s}=(\tfrac{2}{1+4s})^\frac 12.$  Combine
\eqref{Feq1}-\eqref{Feq3} to obtain
\[
\frac{1}{pt_0}F(x_0,t_0)\geq
C_s(\frac{1}{sR^{2s}}F(x_0,t_0)-\frac{\tilde{C_s}t_0^\frac{1}{p}
}{R^{1/2+2s}}\norm{\theta_0}_{L^2(\R)}),
\]
or equivalently
\[
(\frac{C_s}{sR^{2s}}-\frac{1}{pt_0})F(x_0,t_0)\leq
\frac{\tilde{C_s}C_st_0^\frac1p}{R^{1/2+2s}}\norm{\theta_0}_{L^2(\R)}.
\]
Let $p=4s$, and choose $R$ so that $\frac{C_s}{R^{2s}}=\frac{1}{2t_0}$.
Rearranging we have
\[
F(x_0,t_0)\leq C(s)\norm{\theta_0}_{L^2(\R)},
\]
with $C(s)$ as in the statement of the theorem.
Finally, from the definition of $F$
\[
\sup_{\R\times[0,T]}t^\frac{1}{4s}\theta(x,t)\leq C(s)\norm{\theta_0}_{L^2(\R)},
\]
or
\[
\sup_{\R\times[0,T]}\theta(x,t)\leq t^{-\frac{1}{4s}}C(s)\norm{\theta_0}_{L^2(\R)},
\]
and since the estimate is independent of $\epsilon_1$ and $T$ is
arbitrary, the theorem follows (note this gives an upper bound for
$\theta$.  To obtain a lower bound we can redo the proof with $F$ defined
by $-t^\frac{1}{p}\theta(x,t)$.).
\end{proof}

\begin{remark}
Note that an estimate like \eqref{Feq3} could be obtained using any $L^p$ norm instead of $L^2$. We chose to use $L^2$ because it is the norm that is easiest to show that it stays bounded (using the energy inequality). 
\end{remark}

\section{The oscillation lemma}
\label{s:oscillation}

\begin{lemma} \label{l:measureToPointEstimate}
Let $M_{0} \geqslant 2$ and $s \in [\frac{1}{4} , \frac{1}{2}]$.  Assume $\theta\leq 1$ in $\mathbb{R} \times [-\frac{2}{M_{0}} , 0]$ and $\theta$ is a subsolution of
\[
\theta_{t} + M \theta \cdot \theta_{x} + (-\triangle )^{s} \theta - \eps_{1} \triangle \theta \leq \eps_{0}  ,
\]
in the set $[-5, 5 ] \times [-\frac{2}{M_{0}} , 0] $ where $|M| \leq M_{0}$ and $0 < \eps_{1} \leq 10^{3/2}$. Assume also that 
\[
|\{\theta \leq 0\}\cap ( [-1,1] \times [-\frac{2}{M_{0}} , -\frac{1}{M_{0}}]) | \geq \mu.
\]
Then, if $\eps_0$ is small enough (depending only on $\mu$ and $M_0$) there is a $\lambda>0$ (depending only on $\mu$ and $M_0$) such that $\theta \leq 1-\lambda$ in $[-1,1] \times [-\frac{1}{M_{0}} , 0]$.
\end{lemma}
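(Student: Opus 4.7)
The natural strategy is a De Giorgi-type iteration on nested cylinders with rising truncation levels, in the spirit of \cite{caffarelli2006drift} and \cite{chan2008regularity} but adapted to a nonlocal subsolution with drift $M\theta\bdary_x\theta$ and forcing $\eps_0$. Fix $\lambda \in (0, 1/2)$ to be chosen, and set
\[
L_k = 1 - \lambda - \lambda 2^{-k}, \qquad Q_k = \bigl[-(1+2^{-k}),\, 1+2^{-k}\bigr] \times \bigl[-(1+2^{-k})/M_0,\, 0\bigr],
\]
so that $L_0 = 1 - 2\lambda$, $L_k \nearrow 1 - \lambda$, and $Q_k \searrow [-1,1] \times [-1/M_0, 0]$. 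Let $\theta_k = (\theta - L_k)_+$ and $A_k = \iint_{Q_k} \theta_k^2 \dd x \dd t$. The goal is to show $A_k \to 0$, which forces $\theta \leq 1-\lambda$ on the limit cylinder. The measure hypothesis enters only at the base step: $\theta_0 \leq 2\lambda$ on $Q_0$ (since $\theta \leq 1$ globally), while $\theta_0 = 0$ on $\{\theta \leq 0\}$, a subset of $Q_0$ of measure at least $\mu$; hence $A_0 \leq 4\lambda^2(|Q_0| - \mu)$, made arbitrarily small by choosing $\lambda$ small depending on $\mu$ and $M_0$.

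The engine is a local energy inequality for $\theta_k$. Let $\phi_k$ be a smooth spatial cutoff adapted to $Q_k$, equal to $1$ on $Q_{k+1}$, with $\norm{\bdary_x \phi_k}_{L^\infty} \leq C 2^k$. Multiplying the subsolution inequality by $\theta_k \phi_k^2$ and integrating over $Q_k$ should yield
\[
\sup_t \int \theta_k^2 \phi_k^2 \dd x + \iint \bigl|(-\lap)^{s/2}(\theta_k \phi_k)\bigr|^2 \dd x \dd t \;\leq\; C\, M_0\, 2^{\beta k}\, A_k + \eps_0 |Q_0|.
\]
Four ingredients go into this. The fractional dissipation produces the $\dot H^s$ seminorm of $\theta_k \phi_k$ modulo a nonlocal tail, which is uniformly bounded using $\theta \leq 1$ on $\R \times [-2/M_0, 0]$. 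The transport $M\theta\bdary_x\theta \cdot \theta_k \phi_k^2$ is rewritten on $\{\theta_k > 0\}$ as $M\bdary_x(\theta_k^3/3 + L_k\theta_k^2/2)\phi_k^2$ and integrated by parts, which costs only $CM_0 2^k A_k$ (the conservative structure of the Burgers nonlinearity is essential here, preventing terms quadratic in $M_0$). The viscosity $\eps_1\lap$ gives a nonnegative dissipation that is discarded, and $\eps_0$ is chosen small enough (depending on $\mu, M_0$) that $\eps_0|Q_0|$ is dominated by $A_k$ throughout the iteration. Combining with the parabolic Sobolev embedding $L^\infty_t L^2_x \cap L^2_t \dot H^s_x \hookrightarrow L^{2+4s}_{t,x}$ in 1D (uniform for $s \in [1/4, 1/2]$), and Chebyshev via $\{\theta_{k+1} > 0\} \subset \{\theta_k > \lambda 2^{-k-1}\}$, yields the nonlinear recursion
\[
A_{k+1} \leq C(M_0, \lambda)\, \gamma^k\, A_k^{1+2s}, \qquad \gamma > 1 \text{ universal},
\]
so a standard De Giorgi iteration gives $A_k \to 0$ provided $A_0$ is below a threshold $A_0^{\ast}(M_0, \lambda)$, which is secured by the final choice of $\lambda$.

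The main obstacle is the supercritical balance between drift and dissipation: $|M|$ is bounded only by $M_0 \geq 2$, not small, while the nonlocal diffusion is of order $s \leq 1/2$. Without the quadratic, conservative structure of the Burgers nonlinearity the iteration would not close, since the truncation of $M\theta\bdary_x\theta$ would produce a term scaling unfavorably in $M_0$; the integration-by-parts trick above is what funnels the drift into a harmless cutoff contribution. The second delicate point is the nonlocal tail produced by truncation, handled using that consecutive cutoffs $\phi_k, \phi_{k+1}$ differ only on a shell of width $\sim 2^{-k}$ and $\theta \leq 1$ globally. The hypothesis $s \geq 1/4$ is dictated by the requirement that the gain exponent $\eta = 2s$ in the parabolic embedding be uniformly bounded away from zero, without which the recursion can no longer drive $A_k$ to zero geometrically.
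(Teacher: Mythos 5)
There is a genuine gap, and it sits at the very first step of your iteration: the claimed smallness of $A_0$. You set $\theta_k=(\theta-L_k)_+$ with $L_0=1-2\lambda$ and argue $A_0\leq 4\lambda^2(|Q_0|-\mu)$, "made arbitrarily small by choosing $\lambda$ small." But the smallness threshold $A_0^{\ast}$ of the De Giorgi recursion scales in exactly the same way in $\lambda$, so nothing is gained. To see this, normalize $v=(\theta-(1-2\lambda))/(2\lambda)$: then $v\leq 1$ where $\theta\leq 1$, your truncations become $\theta_k=2\lambda\,(v-l_k)_+$ with levels $l_k=(1-2^{-k})/2$ independent of $\lambda$, and $v$ satisfies a subsolution inequality of the same form with drift still only bounded by $M_0$ (the factor $2\lambda$ cancels throughout, except in the forcing, which becomes $\eps_0/(2\lambda)$ — worse, not better). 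In these normalized variables the iteration closes only if $\iint_{Q_0} v_+^2$ is below a fixed threshold depending on $M_0,s$, i.e.\ only if the measure of the superlevel set $\{\theta>1-2\lambda\}\cap Q_0$ is small. The hypothesis gives the opposite kind of information: $\{\theta\leq 0\}$ has measure at least $\mu$, with $\mu$ possibly tiny, so $\theta$ may be essentially equal to $1$ on all of $Q_0$ except a set of measure $\mu$, and then $\iint v_+^2\approx |Q_0|-\mu$ is not small. (Quantitatively: with the correct recursion $A_{k+1}\leq C(M_0)\lambda^{-4s/(1+2s)}\gamma^k A_k^{1+2s/(1+2s)}$ — note the exponent is $1+2s/(1+2s)$, not $1+2s$ — the threshold is $\sim c(M_0,s)\lambda^2$, which cancels your $4\lambda^2$ exactly and leaves the unfulfillable requirement $|Q_0|-\mu\lesssim c(M_0,s)$.) This is the classical fact that the first De Giorgi lemma alone cannot convert "the good set has measure $\geq\mu$" into a pointwise decay; one needs a second ingredient — De Giorgi's isoperimetric/intermediate-value lemma iterated over dyadic levels, or some mechanism by which the measure of $\{\theta\leq 0\}$ actively enters the estimate — and correspondingly $\lambda$ must degenerate as $\mu\to 0$, a dependence your argument never actually produces.

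The paper closes this gap by a completely different, non-variational mechanism that uses the nonlocality directly: it defines $m(t)$ by the ODE $m'(t)=c_0|\{x\in[-1,1]:\theta(x,t)\leq 0\}|-C_1m(t)$, $m(-2/M_0)=0$, and proves the barrier bound $\theta\leq 1-m(t)b(x,t)+\eps_0(1+t)$ with a traveling bump $b(x,t)=\beta(|x|+M_0t)$ (the translation at speed $M_0$ absorbs the drift $M\theta\theta_x$). At a maximum point of $w=\theta+mb-\eps_0(1+t)$ with $w>1$, the integral formula for $(-\lap)^s w$ gives the good term $c_0|\{y\in[-1,1]:\theta(y,t_0)\leq 0\}|$ — this is precisely where the measure of the good set feeds into the pointwise estimate, something your energy inequality discards — and comparing with the ODE yields a contradiction; then $\lambda\sim c_0e^{-2C_1/M_0}\mu$, exhibiting the necessary $\mu$-dependence. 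If you want to keep an energy/De Giorgi route, you would have to supplement your first lemma with an isoperimetric-type lemma (as in Caffarelli--Vasseur) or exploit the nonlocal "good term" in the energy inequality, rather than relying on the $\lambda^2$ factor in $A_0$.
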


We will apply the lemma above only to the case when $M$ is constant in
$Q_1$. This is not necessary to prove the lemma as it will be apparent in
the proof. We are not aware of any possible application of the lemma with
variable $M$ (even discontinuous).

\begin{proof}
Let $m: [-\frac{2}{M_{0}} , 0] \to \R$ be the solution of the following ODE:
\begin{equation} \label{e:ODEform}
\begin{aligned}
m(- \frac{2}{M_{0}} ) &= 0, \\
m'(t) &= c_0 | \{x \in [-1,1]: \theta(x,t) \leq 0\}| - C_1 m(t).
\end{aligned}
\end{equation}

The above ODE can be solved explicitly and $m(t)$ has the formula
\[ m(t) = \int_{- \frac{2}{M_{0}}   }^t c_0 | \{x : \theta(x,s) \leq 0\} \cap B_1 | e^{-C_1
(t-s)} \dd s. \]

We will show that if $c_0$ is small and $C_1$ is large, then
$\theta \leq 1 - m(t) + \eps_0$ in $[-1,1] \times [- \frac{1}{M_{0}}   ,0]$. This naturally
implies the result of the lemma since for $t \in [- \frac{1}{M_{0}}  ,0]$,
\[ m(t) \geq c_0 e^{- \frac{2C_{1}}{M_{0}}  } |\{ \theta \leq 0 \} \cap [-1,1] \times
[-\frac{2}{M_{0}}  , - \frac{1}{M_{0}}    ] | \geq  c_0 e^{- \frac{2C_{1}}{M_{0}}  }\mu.\]
So we can set $\lambda = c_0 e^{- \frac{2C_{1}}{M_{0}}    }\mu/2$ for $\eps_0$ small. 

Let $\beta : \R \to \R$ be a fixed smooth nonincreasing function such that $\beta(x)=1$ if $x \leq 1$ and $\beta(x)=0$ if $x \geq 2$. Moreover, we can take $\beta$ with only one inflection point between $0$ and $2$, so that if $\beta \leq \beta_0$ then $\beta'' \geq 0$.

Let $b(x,t) = \beta(|x|+ M_{0} t) = \beta (|x| - M_{0} |t|)$. As a function of $x$, $b(x,t)$ looks like a bump function for every fixed $t$. By construction $b_{xx} \geq 0$ if $b \leq \beta_0$. Moreover, at those points where $b = 0$ (precisely where $|x| \geq 2-M_{0}t = 2 + M_{0} |t|)$, $(-\lap)^s b < 0$. Since $b$ is smooth, $(-\lap)^s b$ is continuous and it remains negative for $b$ small enough. Thus, there is some constant $\beta_1$ such that $b_{xx} \geq 0$ and $(-\lap)^s b \leq 0$ if $b \leq \beta_1$.

Assume that $\theta(x,t) > 1 - m(t) + \eps_0 (1+t)$ for some point $(x,t)
\in [-1,1] \times [-\frac{1}{M_{0}},0]$. We will arrive to a contradiction by looking at the maximum
of the function
\[ w(x,t) = \theta(x,t) + m(t) b(x,t) - \eps_0 (1+t). \]
We are assuming that there is one point in $[-1,1] \times [-\frac{1}{M_{0}},0]$ where $w(x,t) > 1$. Let $(x_0,t_0)$ be the point that realizes the maximum of $w$:
\[ w(x_0,t_0) = \max_{\R \times [-\frac{2}{M_{0}},0]} w(x,t).\]
(Note $(x_0,t_0)$ exists by the definition of $w$ and Lemma \ref{l:properties-approx}.)
Since $w(x_0,t_0) > 1$, by using the fact that $\theta (x_0 , t_0) \leq 1$, we deduce $m(t_0) b(x_0, t_0) > \eps_0 (1+t_0) >0$, which further implies $m(t_0) >0$ (this tells us that $t_{0} > -\frac{2}{M_{0}}$) and $b(x_0 , t_0) > 0$, so $|x_0| < 2 + M_{0} |t_{0}| \leq 4 $.

Since the function $w$ realizes a maximum at $(x_0,t_0)$, we have the
following elementary inequalities:
\begin{align*}
w(x_0,t_0) &> 1\\
w_t(x_0,t_0) &\geq 0  \\
w_x(x_0,t_0) &= 0 \\
\lap w(x_0,t_0) &\leq 0 \\
(-\lap)^s w(x_0,t_0) &\geq 0
\end{align*}

The last inequality can be turned into a more useful estimate by recalling
the integral formula of $(-\lap)^s w$ and looking at the set of points
where $\theta \leq 0$.
\[
\begin{aligned}
(-\lap)^s w(x_0,t_0) &= C_s \int_{\R} \frac{ w(x_0,t_0) - w(y,t_0)
}{|x_0-y|^{1+2s}} \dd y \qquad \text{(Note the integrand is nonnegative)}
\\
&\geq C_s \int_{\{y \in [-1,1] : \theta(y,t_0) \leq 0\}} (w(x_0,t_0) -
w(y,t_0)) 5^{-1-2s} \dd y\\
&\geq C_s (1-m(t_0)) 5^{-1-2s} |\{y \in [-1,1] : \theta(y,t_0) \leq 0\}| \\
&\geq \frac{C_{s}}{25} (1-m(t_0))|\{y \in [-1,1] : \theta(y,t_0) \leq 0\}| ,
\end{aligned}
\]
where the last inequality is valid since $5^{1+2s} \leq 25$ for $\frac{1}{4} \leq s \leq \frac{1}{2}$.
We choose the constant $c_0$ in order to make sure that $m(t)$ stays below
$1/4$ (simply by choosing $c_0 < 1/8$), and we choose $c_0 \leq  \frac{3}{4}\frac{C_{s}}{25} $, so
that
\begin{equation} \label{e:bound-fraclap}
(-\lap)^s w(x_0,t_0) \geq c_0 |\{y \in [-1,1] : \theta(y,t) \leq 0\}|.
\end{equation}

Note that the constant $C_s$ in the integral form of the fractional
Laplacian stays bounded and away from zero as long as $s$ stays away from
$0$ and $1$. We can consider $C_s$ bounded above and below independently
of $s$ as long as $s$ stays in a range away from $0$ and $1$, like for
example $s \in [1/4,1/2]$.

Now we recall that $w = \theta + mb - \eps_0 (1+t)$ and we rewrite the inequalities in terms of $\theta$. 
\begin{align*}
1 \geq \theta(x_0,t_0) &\geq 1 - m(t_0) b(x_0,t_0) \geq 3/4 \\
\theta_t (x_0,t_0) &\geq -m'(t_0) b(t_0,x_0) + m(t_0)M_{0} |b_x(x_0,t_0)| + \eps_0\\
\theta_x (x_0,t_0) &= -m(t_0) b_x(x_0 ,t_0) \\
\lap \theta(x_0,t_0) &\leq -m(t_0) \lap b(x_0,t_0) \\
(-\lap)^s \theta(x_0,t_0) &\geq -m(t_0) (-\lap)^s b(x_0,t_0) + c_0 |\{y \in
[-1,1] : \theta(y,t_0) \leq 0\}|
\end{align*}

We consider two cases and obtain a contradiction in both. Either $b(x_0,t_0) > \beta_1$ or $b(x_0,t_0) \leq \beta_1$.

Let us start with the latter. If $b(x_0,t_0) \leq \beta_1$, then $\lap b(x_0,t_0) \geq 0$ and 
$(-\lap)^s b(x_0,t_0) \leq 0$, then
\begin{align*}
\lap \theta(x_0,t_0) &\leq -m(t_0) \lap b(x_0,t_0) \leq 0 \\
(-\lap)^s \theta(x_0,t_0) &\geq c_0 |\{y \in [-1,1] : \theta(y,t_0) \leq 0\}|
\end{align*}

Therefore
\[ \eps_0 \geq \theta_t + M \theta \theta_x + (-\lap)^s \theta - \eps_1
\lap \theta \geq \eps_0 - m'(t_0) b(x_0) + c_0 |\{y \in [-1,1]
: \theta(y,t_0) \leq 0\}| ,
\]
where in the last inequality, we have implicitly use the fact that 
 
\[m(t_0)\big(M_0|b_x(x_0 ,t_0)| - M\theta (x_0 ,t_0 ) b_x(x_0 ,t_0)\big) \geq 0,\] 
since $1\geq\theta (x_0 ,t_0)\geq\frac 34$ and $|M|\leq M_0$.

So we obtain
\begin{equation*}
 - m'(t_0) b(x_0) + c_0 |\{y \in [-1,1] : \theta(y,t_0) \leq 0\}| \leq 0,
\end{equation*}
but this is a contradiction with \eqref{e:ODEform} for any $C_1 \geq 0$.

Let us now analyze the case $b(x_0,t_0) > \beta_1$. Since $b$ is a smooth, compactly supported function, there is some constant $C$ (depending on $M_{0}$), such that $|\lap b| \leq C$ and $|(-\lap)^s b| \leq C$.
Then we have the bounds
\begin{align*}
\lap \theta(x_0,t_0) &\leq -m(t_0) \lap b(x_0,t_0) \leq C m(t_0) \\
(-\lap)^s \theta(x_0,t_0) &\geq c_0 |\{y \in [-1,1] : \theta(y,t_0) \leq 0\}| - C m(t_0)
\end{align*}
Therefore
\[ \eps_0 \geq \theta_t + M \theta \theta_x + (-\lap)^s \theta - \eps_1
\lap \theta \geq \eps_0 - m'(t_0) b(x_0 ,t_0 ) - C m(t_0) + c_0 |\{y \in [-1,1]
: \theta(y,t_0) \leq 0\}|
\]
and we have
\begin{equation*} 
 - m'(t_0) b(x_0,t_0) - C m(t_0) + c_0 |\{y \in [-1,1] : \theta(y,t_0) \leq 0\}|
\leq 0.
\end{equation*}

We replace the value of $m'(t_0)$ in the above inequality using
\eqref{e:ODEform} and obtain
\[  (C_1 b(x_0,t_0)- C) m(t_0) + c_0 (1- b(x_0,t_0)) |\{y \in [-1,1] : \theta(y,t)
\leq 0\}| \leq 0.  \]
Recalling that $b(x_0,t_0) \geq \beta_1$, we arrive at a contradiction if $C_1$ is
chosen large enough.
\end{proof}

\begin{lemma} \label{l:oscillation-approx}
Let $s \in [\frac{1}{4} , \frac{1}{2}]$, and let $\theta$ be a solution of
\be\label{e:osc2}
 \theta_t + M \theta \cdot \theta_x + (-\lap)^s \theta - \eps_1 \lap
\theta \leq 0,
\ee
where $|M|\leq 1$ and $\eps_1 \leq 1$.  Assume that $|\theta| \leq 1$ in $Q_1$ and
$|\theta(x)| \leq |500x|^{2\alpha}$ for $|x|>1$.  Then if $\alpha$ is small
enough, there is a $\lambda>0$ (which does not depend on $\eps_1$) such
that $\osc_{Q_{1/400}} \theta \leq 2-\lambda$.
\end{lemma}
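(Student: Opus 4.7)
The plan is to deduce the oscillation bound from the measure-to-point estimate (Lemma~\ref{l:measureToPointEstimate}) applied to a normalized and truncated version of $\theta$. Two issues have to be addressed: a De Giorgi dichotomy, and the fact that the tail $|\theta(x)|\leq|500x|^{2\alpha}$ exceeds $1$ outside $Q_{1}$, so $\theta$ itself does not satisfy the global bound that Lemma~\ref{l:measureToPointEstimate} requires.

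For the dichotomy, set $Q^{-}:=[-1,1]\times[-1,-\tfrac 12]$. At least one of $\{\theta\leq 0\}\cap Q^{-}$ or $\{\theta\geq 0\}\cap Q^{-}$ has measure $\geq |Q^{-}|/2$. In the first case we shall show $\theta\leq 1-\lambda$ in a cylinder containing $Q_{1/400}$; in the second, the symmetric argument applied to $-\theta$ (which satisfies the analogous inequality with $M$ replaced by $-M$, still with $|{-M}|\leq 1$) yields $\theta\geq -1+\lambda$. Combined with $|\theta|\leq 1$ on $Q_{1}$, either case gives $\osc_{Q_{1/400}}\theta\leq 2-\lambda$. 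Assume henceforth that we are in the first case.

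To handle the tail, first normalize by putting $K:=2500^{2\alpha}$ and $\tilde\theta:=\theta/K$. Then $|\tilde\theta|\leq 1$ on $[-5,5]\times[-1,0]$ (since $|\theta(y)|\leq\max(1,|500y|^{2\alpha})\leq K$ for $|y|\leq 5$), while $|\tilde\theta(y)|\leq (|y|/5)^{2\alpha}$ for $|y|>5$, and $\tilde\theta$ satisfies the same subsolution inequality with $M$ replaced by $MK$, where $|MK|\leq 2$ for $\alpha$ small. Next choose a smooth nonnegative $\phi$ with $\phi\equiv 0$ on $[-5,5]$ and $\phi(y)\geq(|y|/5)^{2\alpha}-1$ for $|y|>5$, and set $\bar\theta:=\tilde\theta-\phi$. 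Then $\bar\theta\leq 1$ globally. Since $\phi$ and all its derivatives vanish on $[-5,5]$, the local terms of the equation are unchanged on that region, leaving only a nonlocal source:
\[
\bar\theta_{t}+MK\,\bar\theta\,\bar\theta_{x}+(-\lap)^{s}\bar\theta-\eps_{1}\lap\bar\theta\leq |(-\lap)^{s}\phi(x)|,
\]
and the right-hand side is small:
\[
|(-\lap)^{s}\phi(x)|\leq C\int_{5}^{\infty}\frac{(y/5)^{2\alpha}-1}{y^{1+2s}}\,dy=\frac{C\cdot 5^{-2s}\cdot 2\alpha}{2s(2s-2\alpha)}=O(\alpha),
\]
uniformly for $s\in[\tfrac 14,\tfrac 12]$. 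So for $\alpha$ small the source drops below the $\eps_{0}$ threshold of Lemma~\ref{l:measureToPointEstimate}. Since $\phi\equiv 0$ on $[-1,1]$ and $K>0$, the measure condition on $Q^{-}$ transfers from $\theta$ to $\bar\theta$, so Lemma~\ref{l:measureToPointEstimate} (with $M_{0}=2$) yields $\bar\theta\leq 1-\lambda$ on $[-1,1]\times[-\tfrac 12,0]$. Undoing the normalization, $\theta=K\bar\theta\leq K(1-\lambda)\leq 1-\lambda/2$ on the same set, which contains $Q_{1/400}$; with $\theta\geq -1$ on $Q_{1}$ this gives the claim after relabeling $\lambda$.

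The main technical step is the source bound $|(-\lap)^{s}\phi|=O(\alpha)$ on all of $[-5,5]$. Smoothness of $\phi$ at $|y|=5$ avoids a kernel singularity, and the $O(\alpha)$ smallness is precisely the cancellation between $(y/5)^{2\alpha}$ and $1$ in the tail integral—this is where the slightly supercritical regime and the smallness of the Hölder exponent $\alpha$ are used to pay for the polynomial growth allowed by the hypothesis.
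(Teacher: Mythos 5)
Your overall strategy (dichotomy in measure, modify $\theta$ so that it is globally $\leq 1$, absorb the resulting nonlocal error into the $\eps_0$ of Lemma \ref{l:measureToPointEstimate}) is the same mechanism as the paper's, but there is a genuine gap in how you handle the tail. First, the corrector you ask for does not exist: a smooth $\phi$ that vanishes identically on $[-5,5]$ has all derivatives zero at $\pm 5$, hence $\phi(y)=o(|y|-5)$ as $|y|\to 5^+$, while the requirement $\phi(y)\geq(|y|/5)^{2\alpha}-1\sim \tfrac{2\alpha}{5}(|y|-5)$ forces a nonzero one-sided slope. More seriously, even taking $\phi(y)=\bigl((|y|/5)^{2\alpha}-1\bigr)_+$ (which is enough regularity away from the kink), your bound $|(-\lap)^s\phi(x)|\leq C\int_5^\infty\frac{(y/5)^{2\alpha}-1}{y^{1+2s}}\,dy=O(\alpha)$ is only valid when $x$ stays well inside $[-5,5]$, because you replaced the kernel $|x-y|^{-1-2s}$ by $|y|^{-1-2s}$. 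Lemma \ref{l:measureToPointEstimate} requires the perturbed inequality on all of $[-5,5]\times[-\tfrac{2}{M_0},0]$, and for $x$ near $\pm 5$ the near-field part of the integral contributes at least $c\,\alpha\int_0^1 r^{-2s}\,dr=\frac{c\,\alpha}{1-2s}$, which diverges at $s=1/2$ and, in the regime of interest $1-2s<\alpha$, is of order one rather than $O(\alpha)$; so it cannot be pushed below the $\eps_0$ threshold by shrinking $\alpha$. The error is small only at points whose distance to the set where the modification acts is bounded below, and your construction leaves no such margin: the modification starts exactly at $|y|=5$, the boundary of the region where the subsolution inequality is needed.

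This is precisely the point the paper's proof is engineered around: it truncates ($\overline\theta=\min(\theta,1)$, which plays the role of your $\tilde\theta-\phi$), verifies the inequality with error $\omega(\alpha)$ only on $Q_{1/2}$ --- the remark ``note below that we cannot take $x\in Q_1$'' is exactly the kernel-singularity issue above, and the distance $1/2$ from $\{|y|>1\}$ makes $|x-y|\gtrsim|y|$ legitimate --- and then rescales $x\mapsto x/10$, $t\mapsto t/10^{2s}$ so that $Q_{1/2}$ becomes $Q_5\supset[-5,5]\times[-\tfrac{2}{M_0},0]$. This rescaling is why Lemma \ref{l:measureToPointEstimate} is stated with a general $M_0\geq 2$ and the tolerance $\eps_1\leq 10^{3/2}$, and why the paper takes $M_0=2\cdot 10^{1/2}$ to absorb the factor $10^{1-2s}$ in the drift. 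Your argument can be repaired in the same spirit (e.g., normalize so that the modification only acts for $|y|>10$ while the lemma is applied on $[-5,5]$, keeping a margin, and then track the bookkeeping back to $Q_{1/400}$), but as written the key smallness claim fails where it is needed.
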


There is no deep reason for the choice of the number $500$ in the above lemma. But the smaller the cube is, say $Q_{\frac{1}{400}}$, on which the improved oscillation occurs, we need a number, say $500$, which is greater than $400$ in order to make inequality \eqref{c2} hold.  In principle, $500$ can be replaced by any number greater than $400$.

\begin{proof}
We want to apply Lemma \ref{l:measureToPointEstimate} to $\theta$.  We check if we have the required hypothesis.   We set $M_0=2\cdot10^{1/2}$.  (The reason for this choice will become clear shortly.)  Next, $\theta$ will be either nonnegative or nonpositive in half of
the points in  $[-10,10] \times
[-\frac{2}{M_{0}} , -\frac{1}{M_{0}}] $ (in measure).   Let us assume
$| \{ (x,t) \in [-1,1] \times [-\frac{2}{M_{0}} , -\frac{1}{M_{0}}] : \theta(x,t) \leq 0 \} | \geq \mu =\frac{1}{M_0}$.
(Otherwise, we would continue the proof with
$-\theta$ instead of $\theta$ and $-M$.)  
Next, the hypothesis that we are missing is that $\theta $ may be larger than
$1$ outside $Q_1$.   Thus we define
\[ \overline \theta = \min(\theta,1) .\]  
We show below $\overline \theta$ satisfies
 \be\label{e:osc21}
\overline \theta_{t} + M  \overline \theta \cdot \overline \theta_{x} +
(-\triangle)^{s} \overline \theta - \eps_{1} \triangle \overline \theta \leq
\eps_{0}.
\ee
over $Q_{1/2}$ for $\epsilon_{0}$ small enough.    Since $\theta$ satisfies \eqref{e:osc2} and  $\overline \theta=\theta$ on $Q_1$ we must only check the difference of $(-\lap)^s \theta$ and
$(-\lap)^s \overline \theta$ since this is the only nonlocal term in the
equation. Let $|x| \leq 1/2$ (note below that we cannot take $x\in Q_1$)
\[ \begin{aligned}
(-\lap)^s \overline \theta(x,t) - (-\lap)^s \theta(x,t) &= C_s \int_{\R} 
\frac{ \overline \theta(x,t) - \theta(x,t) - \overline \theta(y,t) +
\theta(y,t) }{|x_0-y|^{1+2s}} \dd y \\
&=  C_s \int_{\{y : \theta(y,t) > 1\}}  \frac{ \theta(y,t) - 1
}{|x_0-y|^{1+2s}} \dd y \\
& \leq C \int_{\{|y| > 1\}} \frac{|500y|^{2\alpha } - 1 }{|y|^{\frac{3}{2}}}
\dd y =: \omega(\alpha),
\end{aligned}\]
where, in the last inequality, we have used the assumption that $ \frac{1}{4} \leq s \leq \frac{1}{2}$.   
Notice $\omega(\alpha) \to 0$ as $\alpha \to 0$. So we can choose
$\alpha>0$ such that $\omega(\alpha) < \eps_0$.   
Hence $\overline \theta$ satisfies \eqref{e:osc21} over $Q_{1/2}$ as claimed.

However, in order to apply Lemma \ref{l:measureToPointEstimate}, we need to rescale so that we can have that the inequality holds on $[-5,5]\times [-\frac 2 M_0, 0]$.  Since we also need to preserve the condition $\overline \theta   \leq 1$ after rescaling, we choose
to work with  the function $ \overline \theta^{*} (x,t) = \overline \theta (\frac{1}{10} x ,
\frac{1}{10^{2s}} t) $.  Observe that $\overline \theta^{*}$ satisfies the following differential inequality over $Q_5$.

\begin{equation}
\overline \theta^{*}_{t} + 10^{1-2s} M \overline \theta^{*} \cdot \overline
\theta^{*}_{x} + (-\triangle )^{s} \overline \theta^{*}
- 10^{2-2s} \eps_{1} \triangle \overline \theta^{*} \leq \frac{\eps_{0}}{10^{2s}}
\leq \eps_{0}.   
\end{equation}

Observe that with $M_0=2 \cdot 10^{\frac{1}{2}}$, $[-5,5]\times [-\frac 2 M_0, 0] \subset Q_5$, and $10^{1-2s} |M | \leq M_{0}$.  Also $10^{2-2s} \eps_{1} \leq 10^{3/2}$,  and since by construction
$\overline \theta^{*}\leq 1 \in \R \times [-\frac 2 M_0, 0]$,  we now finally can apply Lemma \ref{l:measureToPointEstimate} and obtain that $ \overline \theta^{*} \leq
1 - \lambda $ over $[-1,1]\times [-\frac{1}{M_{0}} , 0]$, where $\lambda$
depends only on $M_{0} = 2\cdot 10^{\frac{1}{2}}$.  However, since we would like to have an improved oscillation on a parabolic cube, we note that $Q_{1/40}=[-\frac{1}{40} ,
\frac{1}{40}] \times [-\frac{1}{40^{2s}} , 0] \subset [-1,1] \times
[-\frac{1}{M_{0}} , 0]$, for  $\frac{1}{4} \leq s \leq \frac{1}{2}$.  So we
have $\overline \theta^{*} \leq 1- \lambda  $ over $Q_{1/40}$ .   Hence by rescaling
$\theta =\overline \theta \leq 1- \lambda$ in $Q_{1/400}$.  This completes the proof.
\end{proof}



\section{Proof of the main result}
To simplify the exposition of the proof of theorem \ref{t:main-viscosity}, we first
state and establish the following technical but elementary lemma.
\begin{lemma}\label{l:techlemma}
For any $\rho \in (0 , \frac{1}{400})$, there exists some $\alpha_{1} \in
(0, \frac{1}{2} )$, depending only on $\rho$, such that for any $0 <
\alpha < \alpha_{1}$, the following holds:
\begin{align}
1&<\frac{1}{400 \rho } - \frac{1}{\rho } (1- \rho^{\alpha }),  \label{c1}\\
\rho^{-\alpha } (2 - \rho^{\alpha })& < 500^{2\alpha } \{  \frac{1}{400 \rho} - \frac{1}{\rho } (1- \rho^{\alpha })         \}^{2\alpha },\label{c2}\\
\rho^{-\alpha } ( 500^{2\alpha } + 1 - \rho^{\alpha }     ) & <  500^{2
\alpha } \{  \frac{1}{ \rho } - \frac{1}{\rho } (1- \rho^{\alpha })       
 \}^{2\alpha }.\label{c3}
\end{align}
\end{lemma}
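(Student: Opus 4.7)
My plan is to treat all three inequalities uniformly: each becomes an equality (or a trivial strict inequality) at $\alpha = 0$, so I would establish strict inequality for small positive $\alpha$ by expanding both sides in Taylor series around $\alpha = 0$, then take $\alpha_1$ as the minimum of the three resulting thresholds (intersected with $(0,1/2)$).

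Inequality \eqref{c1} is immediate from continuity: as $\alpha \to 0^+$ the right-hand side tends to $\frac{1}{400\rho}$, which strictly exceeds $1$ by the hypothesis $\rho < \frac{1}{400}$, so the strict inequality is preserved on a one-sided neighborhood of $0$.

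For \eqref{c2} and \eqref{c3} both sides converge to $1$ at $\alpha = 0$, so I would expand both sides using $\rho^\alpha = 1 + \alpha\ln\rho + \frac{\alpha^2}{2}(\ln\rho)^2 + O(\alpha^3)$ and the analogous expansion for $500^{2\alpha}$, ultimately comparing logarithms. For \eqref{c2}, after rewriting the inner factor on the right as $\frac{1}{\rho}(\rho^\alpha - \frac{399}{400})$ and expanding, the difference RHS $-$ LHS comes out to $2\alpha \ln(5/4) + O(\alpha^2)$. Since $\ln(5/4) > 0$, this handles \eqref{c2}.

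The delicate case is \eqref{c3}. First I would notice the simplification $\frac{1}{\rho} - \frac{1}{\rho}(1 - \rho^\alpha) = \rho^{\alpha - 1}$, so the right-hand side equals $500^{2\alpha}\rho^{2\alpha^2 - 2\alpha}$. A direct expansion then shows that both sides share the \emph{same} first-order coefficient $2\ln(500/\rho)$, so the first-order test is inconclusive and I must carry the expansion to order $\alpha^2$. The resulting difference RHS $-$ LHS has leading term $\alpha^2 \ln\rho\,[2 - 2\ln 500 + \ln\rho]$. Both factors are negative ($\ln\rho < 0$ since $\rho < 1$, and $2 - 2\ln 500 + \ln\rho < 0$ since $2\ln 500 > 2$), so the product is strictly positive. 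This yields \eqref{c3} on some interval $(0, \alpha_1^{(3)})$, and taking $\alpha_1$ to be the minimum of the three thresholds (and $1/2$) finishes the proof. The main obstacle is precisely this second-order cancellation in \eqref{c3}, where one must identify the correct rearrangement and verify the sign of the explicit quadratic coefficient.
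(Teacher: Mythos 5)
Your proposal is correct and follows essentially the same route as the paper: \eqref{c1} by continuity at $\alpha=0$, \eqref{c2} by a first-order (limiting) comparison, and \eqref{c3} by pushing the expansion to second order after the simplification $\frac{1}{\rho}-\frac{1}{\rho}(1-\rho^\alpha)=\rho^{\alpha-1}$; your quadratic coefficient $\ln\rho\,(2-2\ln 500+\ln\rho)$ matches (up to the harmless factor $\rho^{2\alpha}$ and a sign convention) the paper's computation $f''(0)=\ln\rho\,(4\ln 500-4-2\ln\rho)<0$ for $f(\alpha)=\rho^\alpha(500^{2\alpha}+1-\rho^\alpha)-500^{2\alpha}\rho^{2\alpha^2}$.
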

\begin{proof}
\eqref{c1} is immediate by the assumptions on $\rho$.  So is \eqref{c2}
after we observe that it is equivalent to
\[
\rho^{-\frac 12}(2-\rho^\alpha)^\frac{1}{2\alpha} <
500\left(\frac{1}{400\rho}- \frac{1}{\rho}(1-\rho^\alpha)\right).
\]
Since $\lim_{\alpha \rightarrow 0} \rho^{-\frac 12} (2-\rho^\alpha)^\frac{1}{2\alpha} = \frac{1}{\rho} < \frac{500}{400\rho} =\lim_{\alpha \rightarrow 0}  500\left(\frac{1}{400\rho}-   \frac{1}{\rho}(1-\rho^\alpha)\right),$  by continuity, the above inequality holds for sufficiently small $\alpha > 0$.

We rearrange \eqref{c3}, and note that it follows from showing that
\[
f(\alpha)=\rho^{\alpha}(500^{2\alpha}+1-\rho^\alpha)-500^{2\alpha}
\rho^{2\alpha^2},
\]
has a local maximum at $0$. This is indeed true, since
$f(0)=f'(0)=0$, and
\[
f''(0)=\ln \rho (4\ln 500-4-2\ln \rho)<0 ,
\]
for any fixed $\rho \in (0 , \frac{1}{400})$.
\end{proof}
\begin{thm} \label{t:main-viscosity}
Let $\theta$ be a solution of \eqref{e:burger-approx} with $|\theta| \leq 1$ in
$\R \times [-1,0]$. There is a small $\alpha \in (0, \frac{1}{2})$ such that if
$\frac{1- \alpha}{2} < s < 1/2$ then $\theta$ satisfies
\[ |\theta(y,0) - \theta(x,0)| \leq C |x-y|^\alpha \]
for some constant $C$ (independent of $\eps_1$) and for all points such that $|x-y|> c \eps_1^{2-2s}$.
\end{thm}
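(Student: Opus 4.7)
\textbf{Proof plan for Theorem \ref{t:main-viscosity}.}
I would prove this by a classical De Giorgi--type iteration on geometrically shrinking parabolic cylinders, with Lemma \ref{l:oscillation-approx} as the single-step oscillation decrement and Lemma \ref{l:techlemma} as the bookkeeping tool for the far-field tail hypothesis.

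\textbf{Choice of parameters.} Fix a contraction ratio $\rho\in(0,1/400)$ and let $\lambda=\lambda(M_0)$ be the constant produced by Lemma \ref{l:oscillation-approx}. Choose $\alpha\in(0,1/2)$ small enough that: (a) $2-\lambda\le 2\rho^\alpha$, so that one step of the oscillation lemma already matches the target H\"older decay rate; (b) inequalities \eqref{c1}--\eqref{c3} of Lemma \ref{l:techlemma} hold; (c) $\alpha$ is below the threshold making the tail integral $\omega(\alpha)$ in the proof of Lemma \ref{l:oscillation-approx} small enough; and (d) $2s-1+\alpha>0$, which is precisely the hypothesis $s>(1-\alpha)/2$.

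\textbf{Inductive scheme.} Show by induction on $k\ge 0$ that there exist real numbers $a_k\le b_k$ with $b_k-a_k=2\rho^{k\alpha}$ and a shift $q_k\in\R$ such that the rescaled function
\[
\theta_k(x,t):=\frac{2}{b_k-a_k}\bigl[\theta(\rho^k x+q_k,\rho^{2sk}t)-\tfrac{a_k+b_k}{2}\bigr]
\]
satisfies $|\theta_k|\le 1$ on $Q_1$ together with the far-field growth estimate $|\theta_k(x,t)|\le|500x|^{2\alpha}$ for $|x|>1$. The base case $k=0$ is immediate from the hypothesis $|\theta|\le 1$ on $\R\times[-1,0]$. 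A scaling computation in the spirit of the subsection ``A word about scaling'' shows that $\theta_k$ solves
\[
(\theta_k)_t+\rho^{k(2s-1+\alpha)}\theta_k(\theta_k)_x+c_k\rho^{k(2s-1)}(\theta_k)_x+(-\Delta)^s\theta_k-\eps_1\rho^{k(2s-2)}\Delta\theta_k=0,
\]
where $c_k=(a_k+b_k)/2$. The constant-drift term $c_k\rho^{k(2s-1)}(\theta_k)_x$ is eliminated by working in a moving frame, which is exactly what the translations $q_k$ encode. By (d) the nonlinear coefficient satisfies $|\rho^{k(2s-1+\alpha)}|\le 1$, and so long as the viscosity coefficient $\eps_1\rho^{k(2s-2)}\le 1$, Lemma \ref{l:oscillation-approx} applies and produces $\osc_{Q_\rho}\theta_k\le\osc_{Q_{1/400}}\theta_k\le 2-\lambda\le 2\rho^\alpha$, which supplies the new range $[a_{k+1},b_{k+1}]$.

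\textbf{Propagation of the far-field condition (main obstacle).} The genuinely delicate point is to verify that after rescaling $\theta_{k+1}$ still satisfies $|\theta_{k+1}(x)|\le|500x|^{2\alpha}$ for $|x|>1$. This amounts to bounding $\theta_k(\rho x+q,\cdot)$ for $|x|>1$ in two regimes: the inner annulus $1<|x|\le 1/\rho$ where $|\theta_k|\le 1$, controlled by inequality \eqref{c2}, and the outer region $|x|\ge 1/\rho$ where the previous-step far-field bound applies, controlled by inequality \eqref{c3}, while inequality \eqref{c1} guarantees that the recentered interval remains inside the good range in which these estimates are useful. This is the whole purpose of stating Lemma \ref{l:techlemma} at the start of the section, and this step is what fixes $\alpha$ universally in terms of $\lambda$ and $\rho$.

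\textbf{Conclusion.} The iteration proceeds for every $k$ such that $\eps_1\rho^{k(2s-2)}\le 1$, that is down to the viscous scale dictated by the theorem's statement. For every such $k$ one obtains $\osc_{Q_{\rho^k}}\theta\le 2\rho^{k\alpha}$, and standard interpolation between these dyadic oscillation bounds produces
\[
|\theta(y,0)-\theta(x,0)|\le C|x-y|^\alpha
\]
for all $x,y$ with $|x-y|$ above that viscous scale, with a constant $C$ independent of $\eps_1$, completing the proof.
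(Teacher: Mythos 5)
Your proposal follows the paper's route: iterate Lemma \ref{l:oscillation-approx} on cylinders shrinking by a factor $\rho<1/400$, renormalize so that $|\theta_k|\le 1$ on $Q_1$, propagate the tail bound $|\theta_k(x,t)|\le |500x|^{2\alpha}$ via the three inequalities of Lemma \ref{l:techlemma}, arrange $2-\lambda=2\rho^\alpha$, use $2s-1+\alpha>0$ to keep the nonlinear coefficient below $1$, and stop the iteration once the rescaled viscosity $\eps_1\rho^{k(2s-2)}$ reaches order one; this is exactly the paper's scheme, including the roles you assign to \eqref{c1}, \eqref{c2}, \eqref{c3}.

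The one step that does not close as literally written is your treatment of the drift. You define $\theta_k$ with a time-independent shift $q_k$, which leaves in the equation the drift term $c_k\rho^{k(2s-1)}(\theta_k)_x$ with $c_k$ the \emph{accumulated} midpoint. Since $|c_k|$ need not tend to zero and $\rho^{k(2s-1)}\to\infty$ (as $s<1/2$), this coefficient is unbounded in $k$; removing it at stage $k$ by one Galilean change of frame would displace points by an amount of order $\rho^{k(2s-1)}$ over the unit time span of $Q_1$, far exceeding the margin that \eqref{c1}--\eqref{c3} provide, so the propagation of the tail bound (and even the inclusion of the rescaled cylinder inside $Q_{1/400}$) would fail; a constant $q_k$ cannot at the same time ``encode a moving frame.'' The paper avoids this by recentering \emph{incrementally}: at each step only the new adjustment $d$ with $|d|\le\lambda/2$ is subtracted, and the frame shift is $L_t=\rho^{2s-1}M_k d\, t$ with $M_k\le 1$, so the per-step displacement is at most $\rho^{2s-1}(1-\rho^\alpha)$, which is precisely what \eqref{c1} (and the shifted arguments $\frac{1}{400\rho}-L_t$, $\frac{1}{\rho}-L_t$ in the verification of \eqref{c2} and \eqref{c3}) is sized to absorb. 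If you restate your induction hypotheses for the recursively defined, frame-adjusted functions $\theta_{k+1}(x,t)=\rho^{-\alpha}\bigl[\theta_k\bigl(\rho(x+L_t),\rho^{2s}t\bigr)-d\bigr]$, your argument becomes the paper's proof.
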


\begin{proof} 
Fix $\rho \in (0, \frac{1}{400})$.   Let $\alpha_{0}$, and $\alpha_{1}$ be as in Lemma \ref{l:oscillation-approx},
and Lemma \ref{l:techlemma} respectively.  Take $\alpha = \min \{
\frac{\alpha_{0}}{2} , \frac{\alpha_{1}}{2}\}$ ($\alpha$ depends only on $\rho$).    Next let $\lambda$ be as in Lemma \ref{l:oscillation-approx}.  Then  if necessary, we can either make $\lambda$ or $\alpha$ smaller, so that $2-\lambda=2\rho^{\alpha}$.  Finally, set $\frac{ 1 -
\alpha }{2} < s < \frac{1}{2} $.

We define the sequence $\theta_k$ recursively for all nonnegative integers $k$ such that $\rho^{(2-2s)k} \geq \eps_1$. We will do it so that every $\theta_k$ satisfies
\begin{align}
\partial_t \theta_k + M_k \theta_k \partial_x \theta_k + (-\lap)^s
\theta_k - \rho^{(2s-2)k} \eps_1 \lap \theta_{k}&= 0 \ \ \text{in } Q_1 \text{ with } M_k \leq 1, \label{e:h1}\\
|\theta_k(x,t)| &\leq 1 \ \ \text{for } (x,t) \in Q_1, \label{e:h2}\\
|\theta_k(x,t)| &\leq 500^{2\alpha} |x|^{2\alpha} \ \ \text{for } |x| \geq
1 \text{ and } t \in [-1,0], \label{e:h3}
\end{align}
For all $k$, we will have $\theta_k(x,0) = \rho^{-\alpha k} \theta(\rho^k
x,0)$.  So \eqref{e:h2} implies
immediately the result of this theorem.

We have to construct the sequence $\theta_k$. We start with $\theta_0 =
\theta$ and $M_0=1$ which clearly satisfy the assumptions. Now we define
the following ones recursively. Let us assume that we have constructed up
to $\theta_k$ and let us construct $\theta_{k+1}$.

Given the assumptions \eqref{e:h1}, \eqref{e:h2} and \eqref{e:h3}, we can
apply Lemma \ref{l:oscillation-approx} as long as $\eps_1 < \rho^{(2-2s)k}$ and obtain that $\osc_{Q_{1/400}}
\theta_k \leq 2-\lambda=2\rho^\alpha$.  If $\eps_1 \geq \rho^{(2-2s)k}$, we stop the iteration, i.e., we iterate only until the viscosity term becomes large.

Since $\osc_{Q_{1/400}} \theta_k \leq 2-\lambda$, there is a number $d \in
[-\lambda/2,\lambda/2]$ such that
\be\label{m1}
-1+\lambda/2 \leq \theta_k-d \leq 1 - \lambda/2 ,\quad \forall (x,t) \in Q_{1/400}. 
\ee
Now we define $\theta_{k+1}$ as follows,
\[ \theta_{k+1} (x,t) = \rho^{-\alpha} [\theta_k\big( \rho (x + L_t),\rho^{2s}t\big) - d], \]
where $L_t=\rho^{2s-1}M_k d t$.
The function $\theta_{k+1}$ satisfies the equation
\[\partial_t \theta_{k+1} + \rho^{\alpha + 2s - 1} M_k \theta_{k+1}
\partial_x \theta_{k+1} + (-\lap)^s \theta_{k+1} - \rho^{(2s-2)(k+1)} \eps_1 \lap \theta_{k} = 0\]
so we define $M_{k+1} = \rho^{\alpha + 2s - 1} M_k$. Due to the fact that $\alpha + 2s -1 >0$ 
for our choice of $s \in (\frac{1- \alpha }{2} , \frac{1}{2})$, we have $M_{k+1}
\leq M_k$.  Hence, we know that $\theta_{k+1}$ satisfies \eqref{e:h1}.
 
Now, since the graph of $500^{2\alpha} |x|^{2\alpha}$ is symmetric about the y-axis, without loss
of generality, suppose $d<0$, so $L_t>0$. 

To establish \eqref{e:h2} for $\theta_{k+1}$, we first note that by \eqref{m1} we have
\be\label{m2}
-1 + \lambda/2\leq\theta_k\big( \rho (x + L_t),\rho^{2s}t\big) - d \leq 1 - \lambda/2 ,\quad \forall x \in  [-\frac{1}{400\rho}-L_t,\frac{1}{400\rho}-L_t], t\in [0,1]. 
\ee
Next we show that the absolute value of the transport term
$L_t = \rho^{2s-1} M_k d t $ is small enough, so that
$[-1,1]\subset [-\frac{1}{400\rho}-L_t,\frac{1}{400\rho}-L_t]$.  
Indeed, since $M_k d t\leq
\frac{\lambda}{2}=(1-\rho^\alpha)$ we have
\begin{align*}
\frac{1}{400\rho}-\rho^{2s-1} M_k d t&\geq
\frac{1}{400\rho}-\rho^{2s-1}(1-\rho^\alpha)\\
                                                                                                                                         &\geq
\frac{1}{400\rho}-\frac{1}{\rho}(1-\rho^\alpha) >
1,
\end{align*}
which holds by $\eqref{c1}$.  We conclude $[-1,1]\subset [-\frac{1}{400\rho}-L_t,\frac{1}{400\rho}-L_t] $.  Thus by \eqref{m2} for all $(x,t) \in Q_{1}$
\[ |\theta_{k+1}(x,t)| \leq \rho^{-\alpha} |\theta_k\big( \rho (x + L_t),\rho^{2s}t\big)-d| \leq \frac 1 {1-\lambda/2} (1 - \lambda/2 ) = 1, \]
so \eqref{e:h2} holds as needed.

Now we introduce
\[
\psi(x)=\left\{\begin{array}{l} 1 \ \ \quad\quad\quad\quad\quad\mbox
{if}\mbox\quad \abs{x}<1,\\
                                                                                                                                                                                                                        500^{2\alpha}\abs{x}^{2\alpha} \quad\mbox{
if}\mbox\quad \abs{x}\geq 1 .        
\end{array}\right.
\]
By the inductive hypothesis
\[
|\theta_k(x,t)| \leq \psi (x),\quad t \in [-1,0].
\]
Then observe that by definition of $\theta_{k+1}$, in order to establish
\eqref{e:h3} for $\theta_{k+1}$, it is enough to show
\be\label{m3}
\big(\rho^{-\alpha}\psi(\rho(x+L_t))+\rho^{-\alpha} |d|\big) \chi_{\{   |x +
L_{t}| \geqslant \frac{1}{400\rho}         \}  } \leq \psi(x).
\ee

First we note that
\[
\big(\rho^{-\alpha}\psi(\rho(x+L_t))+\rho^{-\alpha} |d|\big) \chi_{\{ |x + L_{t}| \geqslant
\frac{1}{400\rho } \}} \leqslant \phi_{1}(x)  + \phi_{2}(x),
\]
where $\phi_{1}(x) = \rho^{-\alpha } (2- \rho^{\alpha } ) \chi_{\{
\frac{1}{400\rho } \leqslant |x + L_{t}| < \frac{1}{\rho} \} }$
and $\phi_{2}(x) =  \{ \rho^{-\alpha} \psi ( \rho ( x +  L_{t} )  )  +
\rho^{-\alpha } (1-\rho^{\alpha }) \} \chi_{ \{ |x + L_{t}| \geqslant
\frac{1}{\rho } \} }$.
So \eqref{m3} will follow if we can show that $\phi_{1} < \psi$ and $\phi_{2} < \psi$.

To show $\phi_{1} < \psi$, we observe that, by \eqref{c2} we have
\[
\phi_{1} (\frac{1}{400\rho } - L_{t} ) =  \rho^{-\alpha } (2- \rho^{\alpha
} ) <   \psi ( \frac{1}{400\rho } - \frac{1}{\rho } (1- \rho^{\alpha }) )  
    \leqslant   \psi (   \frac{1}{400\rho } - \rho^{2s-1} (1- \rho^{\alpha
})     )
\leqslant \psi ( \frac{1}{400\rho } - L_{t}   ).
\]
Since $ \phi_{1} $ is constant over $[\frac{1}{400\rho } - L_{t} , \frac{1}{\rho} - L_{t}]$, and $\psi (x)$ is strictly increasing for
$x \geqslant \frac{1}{400\rho} - L_{t} $, it follows that  $ \phi_{1} (\frac{1}{400\rho } - L_{t} ) <  
\psi ( \frac{1}{400\rho } - L_{t}   ) $ implies $\phi_{1} \chi_{[\frac{1}{400\rho } - L_{t} , \frac{1}{\rho} - L_{t}]  } < \psi $. On the other hand, 
it is quite obvious that we must have $ \phi_{1} \chi_{[  -\frac{1}{ \rho } - L_{t} , -\frac{1}{400\rho} - L_{t}]  } < \psi.$  Hence we deduce that $\phi_{1} < \psi$.

To prove $ \phi_{2} <  \psi  $, we just need to observe that by \eqref{c3}
\[
\phi_{2} ( \frac{1}{\rho } - L_{t}  ) = \rho^{-\alpha} \{ 500^{2\alpha} + 1
- \rho^{\alpha } \} < \psi ( \frac{1}{\rho } -\frac{1}{\rho }
(1-\rho^{\alpha })  )
\leqslant     \psi (   \frac{1}{ \rho } - \rho^{2s-1} (1- \rho^{\alpha }) 
   )
\leqslant \psi (\frac{1}{\rho }  - L_{t} ).
\]

Now, for any point $x \in [ \frac{1}{\rho } -
L_{t} , +\infty  )$ the derivative of $\phi_{2}$ at $x$ is strictly less
than the derivative of $\psi$ at $x$.  Because of this, $ \phi_{2} (
\frac{1}{\rho } - L_{t}  ) < \psi (\frac{1}{\rho }  - L_{t} )   $ at once
implies that $\phi_{2} \chi_{[ \frac{1}{\rho} - L_{t} , +\infty )} < \psi$. 
On the other hand, we also have $\phi_{2} \chi_{(-\infty , -\frac{1}{\rho} - L_{t}    ]} < \psi$. Hence we conclude that $\phi_{2} < \psi$, and this completes the proof. 
\end{proof}

\begin{cor} \label{c:main1}
Let $\theta$ be a solution of \eqref{e:burgers} with $|\theta| \leq 1$ in
$\R \times [-1,1]$. There is a small $\alpha \in (0, \frac{1}{2})$ such that if
$\frac{1- \alpha}{2} < s < 1/2$ then $\theta(\cdot, t) \in C^\alpha$ for all $t \geq 0$.
\end{cor}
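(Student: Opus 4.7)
The plan is to derive this corollary from Theorem \ref{t:main-viscosity} by passing to the vanishing viscosity limit $\eps_1 \to 0$. By the definition of the solution concept, there is an approximating sequence $(\theta^{(n)})$ of smooth solutions of \eqref{e:burger-approx} with $\eps_1 = \eps_1^{(n)} \to 0$ whose weak-$*$ limit is $\theta$. Each $\theta^{(n)}$ obeys the maximum principle, so $\norm{\theta^{(n)}(\cdot,t)}_{L^{\infty}}$ is non-increasing in $t$; combined with the hypothesis $|\theta|\leq 1$ on $\R\times[-1,1]$ and the $L^\infty$ decay from Theorem \ref{thmdecay}, this lets one arrange (after multiplying $\theta^{(n)}$ by a factor $(1+\delta_n)^{-1}$ with $\delta_n\to 0$) the uniform bound $|\theta^{(n)}|\leq 1$ on $\R\times[-1,T]$ for every fixed $T > 0$. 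The rescaling merely shrinks the coefficient of the nonlinear term to $(1+\delta_n)^{-1}\leq 1$, which is harmless for the estimates underlying Theorem \ref{t:main-viscosity}.

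Next, fix $t_0 \geq 0$. I would apply a time-shifted version of Theorem \ref{t:main-viscosity} to $\theta^{(n)}$ on the parabolic cylinder $\R\times[t_0-1,t_0]$. This yields the uniform H\"older estimate
\[ |\theta^{(n)}(y, t_0) - \theta^{(n)}(x, t_0)| \leq C |x-y|^\alpha \qquad \text{whenever } |x-y| > c(\eps_1^{(n)})^{2-2s}, \]
with constants $C, c, \alpha$ independent of $n$ and of $t_0$. Since $s < 1/2$ gives $2-2s > 0$, the threshold $(\eps_1^{(n)})^{2-2s}$ tends to zero as $n\to\infty$.

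Finally, pass to the limit. The sequence $\theta^{(n)}(\cdot,t_0)$ is uniformly bounded in $L^\infty$, and for each fixed pair $(x,y)$ the H\"older bound applies once $n$ is large enough that $c(\eps_1^{(n)})^{2-2s} < |x-y|$; together with the boundedness this produces an effective uniform modulus of continuity in the limit. By Arzel\`a--Ascoli a subsequence converges locally uniformly to some continuous $\theta_{\infty}$ satisfying the global H\"older estimate $|\theta_{\infty}(y)-\theta_{\infty}(x)|\leq C|x-y|^\alpha$. Uniqueness of the weak-$*$ limit forces $\theta_{\infty} = \theta(\cdot,t_0)$, so $\theta(\cdot,t_0) \in C^\alpha(\R)$, as desired.

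The main obstacle is the normalization $|\theta^{(n)}|\leq 1$: the hypothesis $|\theta|\leq 1$ controls only the weak-$*$ limit, so a careful use of the maximum principle (and, for $t_0 > 1$, its propagation beyond the time interval $[-1,1]$) is needed to transfer the bound to the approximants. Once this normalization is sorted out, the passage to the limit is a standard Arzel\`a--Ascoli argument and the H\"older constant is inherited directly from Theorem \ref{t:main-viscosity}.
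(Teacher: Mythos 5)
Your proposal is correct and follows essentially the same route as the paper: apply Theorem \ref{t:main-viscosity} to the viscous approximations on time-shifted intervals, use that $C$ and $\alpha$ are independent of $\eps_1$ while the threshold $c\,\eps_1^{2-2s}$ tends to zero, and pass the estimate to the weak-$\ast$ limit. Your extra care about transferring the bound $|\theta|\leq 1$ to the approximants (via the maximum principle and the uniform-in-$\eps_1$ decay) and your use of Arzel\`a--Ascoli instead of passing the pointwise estimate directly through weak-$\ast$ convergence are only minor variations on the paper's argument, and in fact address a hypothesis the paper's proof leaves implicit.
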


\begin{proof}
For every $\eps_1$, we have a solution $\theta^{\eps_1}$ of \eqref{e:burger-approx} for which we can apply Theorem \ref{t:main-viscosity} in any interval of time $[-1+t,t]$. Since neither constant $\alpha$ or $C$ depend on $\eps_1$, then for any $h \in \R$,
\[ \theta^{\eps_1}(x+h,t) - \theta^\eps(x,t) \leq C |h|^\alpha \ \ \text{for all $x \in \R$ and $t \in [0,1]$}\]
for all $\eps_1$ small enough (depending on $|h|$). This estimate passes to the limit as $\eps_1 \to 0$ since $\theta^{\eps_1}(\cdot,0) \to \theta(\cdot,0)$ weak-$\ast$ in $L^\infty$. Moreover, it will hold for all $h$ at the limit, which finishes the proof.
\end{proof}

Now the proof of the main result follows immediately.
\begin{proof}[Proof of Theorem \ref{mainthm}]
For any initial data $\theta_0 \in L^2$, by Theorem \ref{thmdecay}
$\norm{\theta(-,t)}_{L^\infty(\R)}$ decays. So all we have to do is wait
until it is less than one, and we can apply Corollary \ref{c:main1}.
\end{proof}

\begin{remark}
The only part of the paper where we use that the solution is in $L^2$ is in the proof of the decay of the $L^\infty$ norm (Theorem \ref{thmdecay}). For the rest of the paper, all we use is that the $L^\infty$ norm of $\theta$ will eventually become smaller than one so that we can apply Corollary \ref{c:main1}. Of course there is nothing special about the number one, and a similar estimate can be obtained just by assuming that $\norm{\theta}_{L^\infty} \leq C$. However, the value of $\alpha$ would depend on this $C$.
\end{remark}

\section*{Acknowledgment}

Luis Silvestre was partially supported by NSF grant DMS-0901995 and the Alfred P. Sloan foundation.

\bibliographystyle{plain}
\bibliography{scb}
\end{document}